\newtheorem{theorem}{Theorem}[section]
\newtheorem{prop}[theorem]{Proposition}
\newtheorem{coro}[theorem]{Corollary}
\tikzset{->-/.style={decoration={
      markings,
      mark=at position #1 with {\arrow{>}}}, postaction={decorate}}}
\tikzset{->>-/.style={decoration={
      markings,
      mark=at position #1 with {\arrow{>>}}}, postaction={decorate}}}
\numberwithin{equation}{section}
\newcommand{\I}{\mathrm{i}}
\newcommand{\E}{\mathrm{e}}
\DeclareMathOperator{\sh}{sh}
\DeclareMathOperator{\ch}{ch}
\DeclareMathOperator{\End}{End}
\DeclareMathOperator{\Sk}{Sk}
\DeclareMathDelimiter{\Norm}{\mathord}{largesymbols}{"3E}{largesymbols}{"3E}
\begin{document}

\baselineskip 15pt
\parskip 7pt
\sloppy


\title[]{Askey--Wilson Polynomial and
  Genus-Two Skein Module}


\author[]
{Kazuhiro Hikami}

\address{Faculty of Mathematics,
  Kyushu University,
  Fukuoka 819-0395, Japan.}

\email{
  \texttt{khikami@gmail.com}
}



\date{July 2, 2025.}

\begin{abstract}
  Based on  the $q$-difference operators for the
  genus-two skein algebra,
  we show the correspondence between the
  reduced
  Askey--Wilson polynomials and
  the skein module of the genus-2 handlebody.
\end{abstract}


\keywords{Askey--Wilson polynomial,
  double affine Hecke algebra,
  skein algebra}



\maketitle
\section{Introduction}

The skein algebra  is a fundamental tool in quantum topology.
As the Chern--Simons topological quantum field theory,
it has been used extensively in constructing quantum invariants of
knots and 3-manifolds such as the Jones polynomial and the
Witten--Reshetikhin--Turaev invariant (see,
\emph{e.g.},~\cite{KaufLins94Book,Licko97Book}).
Recently
it receives
renewed
interests in views from diverse topics in mathematics and physics,
such as
character variety,
Coulomb branch,
cluster algebra,
Teichm{\"u}ller theory,
and
topological phases of  matter.

One of the recent results on the skein algebra
is
a relationship with
the double affine Hecke algebra
(DAHA), which
was originally developed in studies of the symmetric  polynomials
associated to root systems
(see, \emph{e.g.},~\cite{Chered05Book,Macdonald03book}).
Crucial observations are 
that
the automorphisms of  DAHA of type-A and type-$C^\vee C_1$
can be regarded as
the Dehn twists 
on once-punctured torus~$\Sigma_{1,1}$
and 4-punctured sphere~$\Sigma_{0,4}$, respectively.
It was shown~\cite{IChered13a,IChered16a}
that
the super-polynomial of torus knots can be
recovered by use of actions of the Iwahori--Hecke operators of DAHA.

Studies of  DAHA for genus-two surface were initiated
in~\cite{ArthaShaki15a,ArthaShaki17a}.
Therein the $A_1$-type DAHA is generalized as  a $t$-deformed skein
algebra on  genus-two surface.
Later in~\cite{CookeSamue21a}, the genus-two skein module was studied
in detail, and
it was
proved  that the genus-two spherical DAHA
of~\cite{ArthaShaki15a,ArthaShaki17a} at~$t=q$ is isomorphic to the
genus-two skein algebra.
On the other hand,
we have
proposed~\cite{KHikami19a} 
different representations of $q$-difference operators for the genus-two skein algebra
by combining both type-$A_1$ and type-$C^\vee C_1$ DAHA.
We have
further constructed~\cite{KHikami24a}
the Iwahori--Hecke operators as a generalization of
type-$C^\vee C_1$, and discussed was the automorphisms associated
with the Dehn twists on $\Sigma_{2,0}$ as in the case of
$\Sigma_{1,1}$~\cite{IChered13a}.
Since then,
DAHA  for other surfaces has been studied~\cite{KHikami24b,KHikami25pre}.

The purpose of this article is to construct a map from
the genus-two skein
module~\cite{CookeSamue21a}
to
our  genus-two DAHA proposed in~\cite{KHikami19a,KHikami24a},
and to clarify the topological role of the (reduced) Askey--Wilson polynomial.
Namely we will prove that the $\theta$-link, which is a basis of the
genus-two skein module,
is mapped to the reduced Askey--Wilson polynomial.

This paper is organized as follows.
In section~\ref{sec:skein}, we recall the genus-two skein module
following~\cite{CookeSamue21a}.
We give the definitions of our
DAHA for the genus-two  skein algebra  in section~\ref{sec:DAHA20}.
In section~\ref{sec:AW},
reviewed are properties of the Askey--Wilson
polynomials.
We recall their $q$-shift operators for our later use.
In section~\ref{sec:homomorphism} we show the correspondence between
the reduced Askey--Wilson polynomials and the genus-two skein module.
The last section is devoted to concluding remarks.

Throughout this article, we use the standard Pochhammer symbol
(see, \emph{e.g.},~\cite{GaspRahm04}),
\begin{equation*}
  (x)_n = (x;q)_n=\prod_{i=1}^n (1-x \, q^{i-1}) ,
  \qquad
  (x_1,x_2, \dots, x_k)_n=(x_1)_n (x_2)_n \dots (x_k)_n .
\end{equation*}
The reflection operator~$\mathsf{s}$ and
the $q$-shift operators, $\eth$ and $\eth_b$,  are
defined respectively
by
\begin{gather*}
  \mathsf{s} f(x,x_0,x_1)
  =
  f(x^{-1},x_0,x_1),
  \\
  \eth f(x,x_0,x_1) = f(q^{\frac{1}{2}} x,x_0,x_1), \quad
  \eth_0 f(x,x_0,x_1)= f(x, q^{\frac{1}{2}}x_0, x_1) , \quad
  \eth_1 f (x,x_0,x_1)= f(x, x_0, q^{\frac{1}{2}} x_1) .
\end{gather*}
We also use a variant of the hyperbolic functions
defined by
\begin{equation*}
  \ch(x)=x+\frac{1}{x},
  \qquad
  \sh(x) = x- \frac{1}{x}.
\end{equation*}

\section{Genus-Two Skein Module}
\label{sec:skein}

The skein module $\Sk_A(M)$ of 3-manifold $M$ is a
$\mathbb{C}[A^{\pm 1}]$-module spanned by isotopy classes of framed
links in $M$
satisfying 
\begin{gather}
  \label{skein_relation_A}
  \vcenter{\hbox{
    \begin{tikzpicture}
      \draw [line width=1.2pt](0,1) --( 1,0) ;
      \draw[line width=10pt,white](0,0)--(1,1);
      \draw[line width=1.2pt](0,0)--(1,1);
    \end{tikzpicture}
  }}
  =
  A \,
  \vcenter{\hbox{
    \begin{tikzpicture}
      \draw [line width=1.2pt] (1,1) to[out=-135,in=135] (1,0);
      \draw [line width=1.2pt] (0,1) to[out=-45,in=45] (0,0);
    \end{tikzpicture}
  }}
  +A^{-1} \,
  \vcenter{\hbox{
    \begin{tikzpicture}
      \draw [line width=1.2pt] (1,1) to[out=-135,in=-45] (0,1);
      \draw [line width=1.2pt] (1,0) to[out=135,in=45] (0,0);
    \end{tikzpicture}
  }}
  ,
  \\[2mm]
  \vcenter{\hbox{
    \begin{tikzpicture}
      \draw [line width=1.2pt] (0,0) circle (0.5) ;
    \end{tikzpicture}
  }}
  = -A^2 - A^{-2} .
  \notag
\end{gather}
For a surface $\Sigma$,
we denote
$\Sk_A(\Sigma)$  as the skein module
of the thickened surface
$\Sigma\times[0,1]$.
A multiplication
$\mathbb{x} \, \mathbb{y}$
of links $\mathbb{x}$ and $\mathbb{y}$
means
that $\mathbb{x}$ is vertically above $\mathbb{y}$,
\begin{gather*}
  \mathbb{x} \, \mathbb{y}
  =
  \newcolumntype{C}{>{$}c<{$}}
  \begin{tabular}{|C|}
    \hline
    \hphantom{aa}\mathbb{x}\hphantom{aa}
    \\
    \hline
    \mathbb{y}
    \\
    \hline
  \end{tabular}
\end{gather*}

Generally
the skein algebra $\Sk_A(\Sigma)$ for a surface $\Sigma$ with
negative Euler characteristic
is generated by a finite set of
non-separating
simple closed curves~\cite{Santha24a}.
In the case of the skein algebra $\Sk_A(\Sigma_{2,0})$ on genus-two surface,
it is generated by the simple
closed curves~$\mathbb{k}_1, \dots, \mathbb{k}_5$
(see 
Fig.~\ref{fig:curves-20}) satisfying
several relations which  follow from the skein algebras on
sub-surfaces~$\Sigma_{0,4}$ and~$\Sigma_{1,1}$ of~$\Sigma_{2,0}$.
See~\cite{KHikami19a,KHikami24a,CookeSamue21a,ArthaShaki15a,Arthamo23a}
for explicit computations.

\begin{figure}[htbp]
  \centering
  \includegraphics[scale=.6]{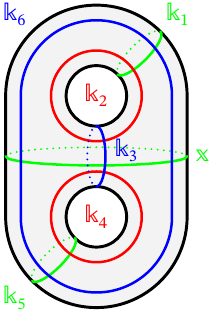}
  \qquad\qquad\qquad
  \includegraphics[scale=.6]{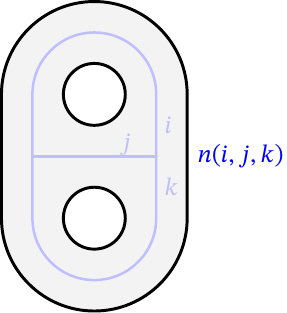}
  \caption{Simple closed curves on $\Sigma_{2,0}$  are given in the
    left.
    The right denotes
    $\theta$-link
    $n(i,j,k)$
    in the  genus-2 handlebody
    as a basis of
    the genus-two skein module.
  }

  \label{fig:curves-20}
\end{figure}

The $\theta$-link $n(i,j,k)$ in
Fig.~\ref{fig:curves-20} is a basis of the skein module of the genus-2
handlebody~\cite{Licko93b}.
Here
$(i,j,k)$ is an admissible triple satisfying
\begin{itemize}
\item $i, j, k \geq 0$,
\item $i+j+k$ is even,
\item $|i-j| \leq k \leq i+j$,
\end{itemize}
and
the trivalent  graph is defined by use of the Jones--Wenzl
idempotent.
See, \emph{e.g.},~\cite{KaufLins94Book,Licko97Book}, for its
definition and
roles
in constructing quantum invariants of knots and 3-manifolds.
The actions of the simple closed curves $\mathbb{k}_a$
on the
$\theta$-link~$n(i,j,k)$ were studied in detail~\cite{CookeSamue21a},
and they are  read under
the skein relations~\eqref{skein_relation_A} with $A=q^{-\frac{1}{4}}$
as
\begin{subequations}
  \label{k_and_theta-link}
  \begin{align}
    &
      \mathbb{k}_1  \, n(i,j,k)= -
      \ch(q^{\frac{1}{2}(i+1)})  \, n(i,j,k) ,
    \\
    &
      \mathbb{k}_3 \, n(i,j,k)= -
      \ch(q^{\frac{1}{2}(j+1)})  \, n(i,j,k) ,
    \\
    &
      \mathbb{k}_5  \, n(i,j,k)= -
      \ch(q^{\frac{1}{2}(k+1)})  \, n(i,j,k) ,
    \\
    &
      \mathbb{k}_2  \, n(i,j,k)
      =
      \sum_{a,b=\pm 1}
      D_{a,b}(i,j,k) \, n(i+a, j+b, k) ,
    \\
    &
      \mathbb{k}_4  \, n(i,j,k)
      =
      \sum_{a,b=\pm 1}
      D_{a,b}(j,k,i) \, n(i, j+a, k+b) ,
    \\
    &
      \mathbb{k}_6  \, n(i,j,k)
      =
      \sum_{a,b=\pm 1}
      D_{a,b}(i,k,j) \, n(i+a, j, k+b) ,
  \end{align}
\end{subequations}
where
\begin{subequations}
  \begin{align}
    &
      D_{1,1}(i,j,k)=1 ,
    \\
    &
      D_{1,-1}(i,j,k)=
      -q^{\frac{1}{2}(i+j-k+1)}
      \frac{
      \bigl(
      1-q^{\frac{1}{2}(-i+j+k)}
      \bigr)^2
      }{
      \left(1-q^{j} \right)
      \left(1-q^{j+1} \right)
      } ,
    \\
    &
      D_{-1,1}(i,j,k)=
      -q^{\frac{1}{2}(i+j-k+1)}
      \frac{
      \bigl(1-q^{\frac{1}{2}(i-j+k)} \bigr)^2
      }{
      \left( 1-q^{i} \right)
      \left( 1-q^{i+1} \right)
      } ,
    \\
    &
      D_{-1,-1}(i,j,k)=
      \frac{
      \bigl( 1-q^{\frac{1}{2}(i+j+k+2)} \bigr)^2
      \bigl( 1-q^{\frac{1}{2}(i+j-k)} \bigr)^2
      }{
      \left( 1-q^{i} \right) \left( 1-q^{i+1} \right)
      \left( 1-q^{j}\right) \left( 1-q^{j+1}\right)
      } .
  \end{align}
\end{subequations}
In~\cite{CookeSamue21a}, given 
was a map from~\eqref{k_and_theta-link} to the Arthamonov--Shakirov
$q$-difference operators~\cite{ArthaShaki15a,ArthaShaki17a}, which may be
regarded as a genus-two generalization of the spherical $A_1$ DAHA
although their Iwahori--Hecke structures are unknown.


\section{DAHA for $\Sigma_{2,0}$}
\label{sec:DAHA20}

Motivated by that the
$A_1$-type and the
$C^\vee C_1$-type spherical DAHAs are
isomorphic  to the skein algebra on once-punctured torus and
4-punctured sphere~\cite{Oblom04a,BerestSamuel16a,MortoSamue21a}
respectively,
we constructed 
the map
\begin{equation*}
  \mathcal{A}:
  \Sk_{A=q^{-\frac{1}{4}}}(\Sigma_{2,0}) 
  \to \End \mathbb{C}(q^{\frac{1}{4}},x_0,x_1)[x+x^{-1}]
\end{equation*}
in our previous papers~\cite{KHikami19a,KHikami24a}.
Explicitly
the simple closed curves in Fig.~\ref{fig:curves-20}
are mapped to
\begin{subequations}
  \label{define_A-k}
  \begin{align}
    &
      \mathcal{A}(\mathbb{k}_1)
      =\left. \ch(\I \, \mathsf{T}_0) \right|_{\text{sym}}
      = \ch(x_0) ,
    \\
    \label{T1T0_t-star}
    &
      \begin{aligned}[t]
        \mathcal{A}(\mathbb{k}_3)
        & =\left. \ch(\mathsf{T}_1\mathsf{T}_0) \right|_{\text{sym}}
        \\
        & =
          \sum_{\epsilon=\pm 1}
          \omega(x^\epsilon)
          \left\{
          -x^{-\epsilon}
          \left(x_0+ \frac{q^{\frac{1}{2}} x^\epsilon}{x_0} \right)
          \left(x_1+ \frac{q^{\frac{1}{2}} x^\epsilon}{x_1} \right)
          \eth^{2\epsilon}
          + q^{\frac{1}{2}} \ch(x_0) \ch(x_1)
          \right\} ,
      \end{aligned}
    \\
    &
      \mathcal{A}(\mathbb{k}_5)
      =\left. \ch(\I \, q^{-\frac{1}{2}}\mathsf{T}_1)
      \right|_{\text{sym}}
      = \ch(x_1) ,
    \\
    \label{A-k2_G}
    &
      \mathcal{A}(\mathbb{k}_2)
      =\left. \ch(\I \, \mathsf{U}_0) \right|_{\text{sym}}
      = \I  \, q^{-\frac{1}{4}} G_0(x_0;x) ,
    \\
    &
      \mathcal{A}(\mathbb{k}_4)
      =\left. \ch(\I \, q^{-\frac{1}{2}}\mathsf{U}_1)
      \right|_{\text{sym}}
      = \I \,  q^{-\frac{1}{4}} G_0(x_1;x) ,
    \\
    &
      \label{A-k6_U1U0}
      \begin{aligned}[t]
        \mathcal{A}(\mathbb{k}_6)
        & =\left. \ch( \mathsf{U}_1 \mathsf{U}_0) \right|_{\text{sym}}
        \\
        & =
          \sum_{\epsilon=\pm 1}
          \omega(x^\epsilon)
          \left\{
          K_0(x_0;x^\epsilon)  \, K_0(x_1;x^\epsilon) \eth^{2\epsilon}
          -G_0(x_0;x) \, G_0(x_1;x)
          \right\} .
      \end{aligned}
  \end{align}
\end{subequations}
Here ``sym'' denotes the action on the symmetric Laurent  polynomials
of~$x$,
and we mean
\begin{equation}
  \omega(x)= \frac{
    x(1+q^{\frac{1}{2}} x)}{
    q^{\frac{1}{2}} (1-x^2)(1-q^{\frac{1}{2}} x)
  } .
\end{equation}
By use of
the $q$-difference operators $K_n$ and $G_n$ defined by
\begin{subequations}
  \begin{align}
    \label{define_K_n}
    K_n(x_b; x)
    &= -\frac{x_b^{-n}}{1-x_b^2} \, \eth_b
    +
    \frac{x_b^n(q^{\frac{1}{2}} x + x_b^2) (q^{\frac{3}{2}} x+x_b^2)}{
    q x (1-x_b^2)} \,
    \eth_b^{-1} ,
    \\
    \label{define_G_n}
    G_n(x_b;x)
    & = - \frac{x_b^{-n}}{1-x_b^2}  \, \eth_b
      +
      \frac{x_b^n (q^{\frac{1}{2}}x + x_b^2) (q^{\frac{1}{2}}+x x_b^2)
      }{
      q^{\frac{1}{2}} x (1-x_b^2)
      } \,
      \eth_b^{-1} ,
  \end{align}
\end{subequations}
the Iwahori--Hecke operators~$\mathsf{T}_b$ and~$\mathsf{U}_b$ are written as~\cite{KHikami24a}
\begin{subequations}
  \label{define_U-T}
  \begin{align}
    \label{eq:3}
    \mathsf{T}_0
    &  \mapsto
      \I\frac{x}{q^{\frac{1}{2}}-x}
      \left(
      -\frac{q^{\frac{1}{2}} + x x_0^2}{x x_0} \,
      \mathsf{s} \,  \eth^2 + \ch(x_0)
      \right) ,
    \\
    \mathsf{T}_1
    & \mapsto
      \I
      \left(
      \frac{1+q^{\frac{1}{2}}x}{
      q^{\frac{1}{2}} (1-x^2)}
      \frac{q^{\frac{1}{2}} x + x_1^2}{x_1}
      \left(\mathsf{s} -1 \right)
      - q^{\frac{1}{2}}x_1^{-1}
      \right) ,
    \\
    \mathsf{U}_0
    & \mapsto
      \frac{q^{-\frac{1}{4}}x}{q^{\frac{1}{2}} - x}
      \left\{
      K_0(x_0;x^{-1}) \, \mathsf{s} \,  \eth^2 - G_0(x_0;x)
      \right\} ,
    \\
    \mathsf{U}_1
    & \mapsto
      -\frac{x(1+q^{\frac{1}{2}} x)}{
      q^{\frac{1}{4}} (1-x^2)
      }
      K_0(x_1;x) \left(\mathsf{s} -1 \right)
      +\frac{q^{\frac{1}{4}}}{1-q^{\frac{1}{2}}x}
      \left(
      G_0(x_1;x) - q^{\frac{1}{2}}x K_0(x_1;x)
      \right) ,
  \end{align}
\end{subequations}
which
satisfy the  Iwahori--Hecke relations,
\begin{subequations}
  \begin{gather}
      \left( \mathsf{T}_0 + \I \,  x_0\right)
      \left(\mathsf{T_0}+ \I \, x_0^{-1} \right)=0 ,      
    \\
      \left( \mathsf{T}_1 + \I  \, q^{-\frac{1}{2}}x_1 \right)
      \left( \mathsf{T}_1 + \I  \, q^{\frac{1}{2}}x_1^{-1} \right)
      =0,
    \\
      \left( \mathsf{U}_0 +
      \frac{q^{-\frac{1}{4}}x}{q^{\frac{1}{2}}-x}
      \left( G_0(x_0 ; x) - K_0(x_0; \tfrac{1}{x}) \right)
      \right)
      \left(
      \mathsf{U}_0 +
      \frac{q^{-\frac{1}{4}}}{q^{\frac{1}{2}}-x}
      \left(
      x  \, G_0(x_0;x) - q^{\frac{1}{2}} K_0(x_0; \tfrac{x}{q})
      \right)
      \right)=0 ,
    \\
      \left(
      \mathsf{U}_1 -
      \frac{q^{\frac{1}{4}}}{1-q^{\frac{1}{2}}x}
      \left( G_0(x_1 ; x) - q^{\frac{1}{2}}x \, K_0(x_1; x)
      \right)
  \right)
  \left(
  \mathsf{U}_1
  -\frac{q^{\frac{3}{4}}}{q^{\frac{3}{2}}-x}
      \left( G_0(x_1; \tfrac{x}{q}) - K_0(x_1; \tfrac{q}{x})
  \right)
  \right) =0 .
\end{gather}
\end{subequations}
It should be noted that
the operators $\mathsf{T}_b$ are the reductions of
the 
Iwahori--Hecke operators for  the  Askey--Wilson polynomial~\cite{NoumiStokm00a}.
See~\cite{KHikami24b} for an application of $\mathsf{U}_0$
to the skein algebra on
twice-punctured torus.

We remark that $x$ denotes the monodromy of the separating curve $\mathbb{x}$ in
Fig.~\ref{fig:curves-20}, and that
\begin{equation}
  \label{eq:1}
  \mathcal{A}(\mathbb{x})=
  \ch(x) .
\end{equation}

\section{Askey--Wilson Polynomial and $q$-Shift Operator}
\label{sec:AW}
The Askey--Wilson polynomials with 4 parameters
$\mathbf{t}=(t_0, t_1, t_2, t_3)$
are  defined as the basic  $q$-hypergeometric series~\cite{AsWi85}
(see also~\cite{GaspRahm04})
\begin{equation}
  \label{define_P_AW}
  \begin{aligned}[b]
    P_n(x; q, \mathbf{t})
    & =
      P_n^{(a,b,c,d)}
      =
      P_n^{(a,b,c,d)}(x)
    \\
    &  =
      \frac{
      \left( a b, a c, a d \right)_n
      }{
      a^n \left( a b c d q^{n-1} \right)_n
      }
      {}_4\phi_3
      \left[
      \begin{matrix}
        q^{-n}, q^{n-1} a b c d, a x, a x^{-1}
        \\
        a b , a c , a d
      \end{matrix}
      ; q, q
      \right] ,
  \end{aligned}
\end{equation}
where
\begin{equation*}
  a=\frac{1}{t_1 t_3}, \quad
  b=-\frac{t_3}{t_1}, \quad
  c= \frac{q^{\frac{1}{2}}}{t_0 t_2}, \quad
  d=-\frac{q^{\frac{1}{2}}t_2}{t_0} .
\end{equation*}
Here
the polynomials are symmetric in~$\{a, b, c, d\}$, and
we have normalized the symmetric Laurent polynomials so that
$P_n(x;q,\mathbf{t})=\ch(x^n)+\dots$,
where $\dots$ denotes a linear sum of $\ch(x^k)$ for
$0\leq k<n$.

We recall the $q$-difference operators introduced in~\cite{KalnMill89a};
\begin{subequations}
  \label{shift_Kalnins}
  \begin{align}
    \label{eq:16}
    & \widehat{\mathcal{m}}^{(a,b,c,d)}
      = \frac{1}{x-x^{-1}}
      \left(
      -\frac{1}{x} \left( 1 - a  q^{-\frac{1}{2}} x \right)
      \left( 1 - b q^{-\frac{1}{2}} x \right) \eth
      + x \left( 1-a q^{-\frac{1}{2}} x^{-1} \right)
      \left( 1- b q^{-\frac{1}{2}} x^{-1}  \right) \eth^{-1}
      \right) ,
    \\
    &\widehat{\mathcal{l}}^{(a,b,c,d)}
      =
      \frac{1}{x-x^{-1}} \left( \eth - \eth^{- 1 }
      \right) ,
    \\
    &
      \begin{multlined}[b][.84\textwidth]
        \widehat{\mathcal{l}}^{* (a q^{\frac{1}{2}} , b q^{\frac{1}{2}},
          c q^{\frac{1}{2}} , d q^{\frac{1}{2}})}
        =
        \frac{q^{-\frac{1}{2}}}{x-x^{-1}}
        \Bigl(
        \frac{
          (1- a x) (1- b x) ( 1- c x) ( 1-d x)}{x^2} \, 
        \eth
        \\
        -
        x^2  (1 - a x^{-1}) (1-b x^{-1}) (1-c x^{-1}) ( 1 - d x^{-1}) \,
        \eth^{-1}
        \Bigr) .
      \end{multlined}
  \end{align}
\end{subequations}
These act as parameter $q$-shift operators
on the Askey--Wilson polynomial~\eqref{define_P_AW} as
\begin{subequations}
  \label{action_shift_Kalnins}
  \begin{align}
    \label{eq:14}
    & \widehat{\mathcal{m}}^{(a,b,c,d)} P_n^{(a,b,c,d)}
      =
      q^{-\frac{n}{2}} (1 - a b q^{n-1} ) \,
      P_n^{(a q^{-\frac{1}{2}}, b q^{-\frac{1}{2}} , c q^{\frac{1}{2}},
      d q^{\frac{1}{2}})} ,
    \\
    & \widehat{\mathcal{l}}^{(a,b,c,d)} P_n^{(a,b,c,d)}
      = -q^{-\frac{n}{2}} ( 1-q^n) \,
      P_{n-1}^{(a q^{\frac{1}{2}}, b q^{\frac{1}{2}},
      c q^{\frac{1}{2}}, d q^{\frac{1}{2}})} ,
    \\
    & \widehat{\mathcal{l}}^{* (a q^{\frac{1}{2}} , b q^{\frac{1}{2}},
      c q^{\frac{1}{2}} , d q^{\frac{1}{2}})}
      P_n^{ (a q^{\frac{1}{2}} , b q^{\frac{1}{2}},
      c q^{\frac{1}{2}} , d q^{\frac{1}{2}})}
      =
      -q^{-\frac{n+1}{2}} (1 - a b c d q^n ) \,
      P_{n+1}^{(a,b,c,d)} .
  \end{align}
\end{subequations}
These $q$-shift operators were employed to simplify the proof of the orthogonality 
of~$P_n^{(a,b,c,d)}$~\cite{KalnMill89a}.

For our purpose of the genus-two skein algebra~$\Sk_A(\Sigma_{2,0})$,
we pay attention to the case~\cite{KHikami19a}
\begin{equation}
  \label{t-star}
  \mathbf{t}_\star=
  \left( \I \, x_0, \I \, q^{-\frac{1}{2}} x_1,
    \I \,  x_0, \I \,  x_1 \right) .
\end{equation}
Hereafter for simplicity
we denote the Askey--Wilson polynomial~\eqref{define_P_AW} at
$\mathbf{t}_\star$
as
\begin{align}
  \label{our_AW_from_symmetric}
  P_n
  & =
    P_n(x; x_0, x_1)
    =
    P_n^{(
    -q^{\frac{1}{2}}, - q^{\frac{1}{2}},
    - q^{\frac{1}{2}}/x_0^2,
    - q^{\frac{1}{2}}/x_1^2
    )}
  \\
  \nonumber
  &
    =
    P_n(x;q, \mathbf{t}_\star)
    =
    (-1)^n q^{-\frac{n}{2}}
    \frac{
    \left( q, \frac{q}{x_0^2}, \frac{q}{x_1^2} \right)_n}{
    \left( \frac{q^{n+1}}{x_0^2 x_1^2} \right)_n
    } 
    \sum_{k=0}^n
    q^k \,
    \frac{
    \left(q^{-n}, \frac{q^{n+1}}{x_0^2 x_1^2} \right)_k}{
    \left( q,q,\frac{q}{x_0^2} , \frac{q}{x_1^2}\right)_k
    }  \, g_k(x) ,
\end{align}
where  $g_k(x)$ is a basis of the symmetric Laurent polynomial
\begin{equation}
  \label{our_symmetric_base}
  g_k(x)=(-q^{\frac{1}{2}}x , -q^{\frac{1}{2}}x^{-1})_k .
\end{equation}
The expansion in terms of~$g_k(x)$
reminds the Habiro expansion of the colored Jones
polynomial~\cite{KHabiro06b}.

The reduced Askey--Wilson polynomial $P_n$
is characterized as an eigenpolynomial of the reduced
Askey--Wilson $q$-difference
operator
\begin{equation}
  \label{eigen_AW}
  \ch\left( \mathsf{T}_1 \mathsf{T}_0 \right) 
  P_n
  =
  - \ch \left( q^{-n-\frac{1}{2}} x_0 x_1 \right) 
  P_n ,
\end{equation}
where $\mathsf{T}_b$ are the Iwahori--Hecke operators
in~\eqref{define_U-T}.
See~\eqref{T1T0_t-star} for the explicit form of the reduced Askey--Wilson operator.
As an orthogonal polynomial,
the three-term recurrence relation for~$P_n$
is read as~\cite{AsWi85}~\cite[(7.5.3)]{GaspRahm04}
\begin{equation}
  \label{3-term-AW}
  \ch(x) \,
  P_n
  =
  P_{n+1} 
  +
  \beta_n(x_0,x_1) \,
  P_n 
  + \gamma_n(x_0,x_1) \,
  P_{n-1} , 
\end{equation}
where
\begin{align}
  \label{define_Beta_n}
  &
    \begin{multlined}[b][.84\textwidth]
      \beta_n(x_0,x_1)
      =
    q^{-\frac{1}{2}}
    \left\{
      \frac{
        \left(1-q^{n+1}\right)
        \left( 1- \frac{q^{n+1}}{x_0^2 x_1^2} \right)
        \left( 1- \frac{q^{n+1}}{x_0^2 } \right)
        \left( 1- \frac{q^{n+1}}{x_1^2} \right)
      }{
        \left(1- \frac{q^{2n+1}}{x_0^2 x_1^2} \right)
        \left(1- \frac{q^{2n+2}}{x_0^2 x_1^2} \right)
      }
      -1\right\}
    \\
    +
    q^{\frac{1}{2}}
    \left\{
      \frac{
        \left(1-q^{n}\right)
        \left( 1- \frac{q^{n}}{x_0^2 x_1^2} \right)
        \left( 1- \frac{q^{n}}{x_0^2 } \right)
        \left( 1- \frac{q^{n}}{x_1^2} \right)
      }{
        \left(1- \frac{q^{2 n}}{x_0^2 x_1^2} \right)
        \left(1- \frac{q^{2 n+1}}{x_0^2 x_1^2} \right)
      }
      -1\right\} ,
  \end{multlined}
  \\
  \label{define_Gamma_n}
    &
      \gamma_n(x_0,x_1) =
      \frac{
      (1-q^n)^2 \left(1-\frac{q^n}{x_0^2 x_1^2} \right)^2
      \left(1-\frac{q^n}{x_0^2 } \right)^2
      \left(1-\frac{q^n}{x_1^2} \right)^2
      }{
      \left(1-\frac{q^{2 n-1}}{x_0^2 x_1^2} \right)
      \left(1-\frac{q^{2 n}}{x_0^2 x_1^2} \right)^2
      \left(1-\frac{q^{2 n+1}}{x_0^2 x_1^2} \right)
      } .
\end{align}
We should note that,
at $\mathbf{t}_\star$~\eqref{t-star},
the connection formula~\cite{AsWi85}~\cite[(7.6.2)]{GaspRahm04} is simplified to be
\begin{equation}
  \label{connection-AW}
  P_n(x;q^{\frac{1}{2}}x_0 , x_1)
  =
  P_n 
  +
  \lambda_n(x_0,x_1) \,
  P_{n-1} , 
\end{equation}
where
\begin{equation}
  \label{define_lambda}
  \lambda_n(x_0,x_1)
  =
  \frac{q^{-\frac{1}{2}} (1-q^n)^2
    \left(1-\frac{q^n}{x_1^2}\right)^2}{
    x_0^2
    \left( 1- \frac{q^{2 n-1}}{x_0^2 x_1^2} \right)
    \left(1- \frac{q^{2 n}}{x_0^2 x_1^2} \right)
  } .
\end{equation}
We also have the similar identity by transposing $x_0$ and $x_1$.
We find that
the functions in~\eqref{define_Beta_n},~\eqref{define_Gamma_n}, and~\eqref{define_lambda},
satisfy
\begin{equation}
  \label{Beta_and_lambda_Gamma}
  \beta_n(x_0,x_1) +
  \ch(  x_0^2q^{\frac{1}{2}}) 
  =
  \lambda_{n+1}(x_0,x_1)+ \frac{\gamma_n(x_0,x_1)}{\lambda_n(x_0,x_1)} .
\end{equation}

In the following, we study the actions of $\mathcal{A}(\mathbb{k}_a)$~\eqref{define_A-k}
on the
reduced Askey--Wilson
polynomial defined by
\begin{equation}
  \label{define_bar-P}
  \widebar{P}_n(x;x_0,x_1)
  =
  \nu_n(x_0,x_1)  \, P_n(x; x_0,x_1) ,
\end{equation}
with the scale function $\nu_n(x_0,x_1)$ to be determined.

\begin{prop}
  \label{prop:action}
  The action of the $q$-difference operators
  $\mathcal{A}(\mathbb{k}_a)$~\eqref{define_A-k} on the
  reduced Askey--Wilson polynomial~\eqref{define_bar-P} are given as follows;
  \begin{subequations}
    \label{A-action_on_P-bar}
  \begin{align}
    \label{A-action-1}
    & \mathcal{A}(\mathbb{k}_1) \,
      \widebar{P}_n
      =
      \ch(x_0) \, \widebar{P}_n,
    \\
    & \mathcal{A}(\mathbb{k}_3) \,
      \widebar{P}_n
      =
      - \ch\left(
      q^{-n-\frac{1}{2}}x_0 x_1
      \right) 
      \widebar{P}_n ,
    \\
    \label{A-action-5}
    & \mathcal{A}(\mathbb{k}_5) \,
      \widebar{P}_n
      =
      \ch(x_1) \, \widebar{P}_n ,
    \\
    \label{k2_AW}
    &
      \begin{multlined}[b][.84\textwidth]
        \mathcal{A}(\mathbb{k}_2) \,
        \widebar{P}_n
        =
        \I \frac{q^{-\frac{1}{4}}}{1-x_0^2}
        \Biggl\{
                x_0^2 \frac{
          \nu_n(q^{-\frac{1}{2}}x_0, x_1)}{
          \nu_{n+1}(x_0,x_1)}
        \widebar{P}_{n+1}
        -
        \frac{\nu_n(q^{\frac{1}{2}}x_0,x_1)}{
          \nu_{n-1}(x_0,x_1)} \,
        \lambda_n(x_0,x_1) \,
        \widebar{P}_{n-1}
        \\
        +
        \left(
          x_0^2 \, \frac{\gamma_n(q^{-\frac{1}{2}}x_0,x_1)}{
            \lambda_n (q^{-\frac{1}{2}}x_0, x_1)}
          \frac{
            \nu_n(q^{-\frac{1}{2}}x_0, x_1)}{
            \nu_n(x_0, x_1)}
          -
          \frac{
            \nu_n(q^{\frac{1}{2}}x_0, x_1)}{
            \nu_n(x_0, x_1)}
        \right)
        \widebar{P}_n
        \Biggr\} ,
      \end{multlined}
    \\
    \label{k4_AW}
    &
      \begin{multlined}[b][.84\textwidth]
        \mathcal{A}(\mathbb{k}_4) \,
        \widebar{P}_n
        =
        \I \frac{q^{-\frac{1}{4}}}{1-x_1^2}
        \Biggl\{
        x_1^2 \, \frac{
          \nu_n(x_0, q^{-\frac{1}{2}}x_1)}{
          \nu_{n+1}(x_0,x_1)}
        \widebar{P}_{n+1}
        -
        \frac{\nu_n(x_0, q^{\frac{1}{2}}x_1)}{
          \nu_{n-1}(x_0,x_1)} \,
        \lambda_n(x_1,x_0) \,
        \widebar{P}_{n-1}
        \\
        +
        \left(
          x_1^2 \, \frac{\gamma_n(x_0, q^{-\frac{1}{2}}x_1)}{
            \lambda_n ( q^{-\frac{1}{2}}x_1, x_0)}
          \frac{
            \nu_n(x_0, q^{-\frac{1}{2}}x_1)}{
            \nu_n(x_0, x_1)}
          -
          \frac{
            \nu_n(x_0, q^{\frac{1}{2}}x_1)}{
            \nu_n(x_0, x_1)}
        \right)
        \widebar{P}_n
        \Biggr\} ,
      \end{multlined}
    \\ 
    \label{A-k6_P6-bar}
    &
      \begin{multlined}[b][.84\textwidth]
        \mathcal{A}(\mathbb{k}_6) \,
        \widebar{P}_n
        =
        \frac{1}{(1-x_0^2)(1-x_1^2)}
        \Biggl\{
                \frac{\nu_n(q^{-\frac{1}{2}}x_0, q^{-\frac{1}{2}}
          x_1)}{
          \nu_{n+1}(x_0,x_1)}
        q^{-n-\frac{5}{2}}
        \left( x_0^2 x_1^2  - q^{n+2} \right)^2
        \widebar{P}_{n+1}
        \\
        -q^{-n-\frac{3}{2}}
        \left(
          \frac{\nu_n(q^{\frac{1}{2}}x_0,
            q^{-\frac{1}{2}}x_1)}{
            \nu_n(x_0,x_1)} 
          \left(x_1^2-q^{n+1}\right)^2
          +
          \frac{\nu_n(q^{-\frac{1}{2}}x_0,
            q^{\frac{1}{2}}x_1)}{
            \nu_n(x_0,x_1)} 
          \left(x_0^2- q^{n+1}\right)^2
        \right) \, \widebar{P}_n
        \\
        +
        \frac{
          \nu_n(q^{\frac{1}{2}}x_0, q^{\frac{1}{2}}x_1)}{
          \nu_{n-1}(x_0,x_1)}
        q^{-n-\frac{1}{2}} (1-q^n)^2 \,
        \widebar{P}_{n-1}
        \Biggr\} .
      \end{multlined}
  \end{align}
\end{subequations}
\end{prop}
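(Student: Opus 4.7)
The formulas~\eqref{A-action-1} and~\eqref{A-action-5} are immediate since $\mathcal{A}(\mathbb{k}_1)=\ch(x_0)$ and $\mathcal{A}(\mathbb{k}_5)=\ch(x_1)$ act by scalar multiplication and commute with the normalization $\nu_n(x_0,x_1)$. The $\mathbb{k}_3$ formula is obtained by recasting the eigenvalue equation~\eqref{eigen_AW} via $\mathcal{A}(\mathbb{k}_3)=\ch(\mathsf{T}_1\mathsf{T}_0)|_{\text{sym}}$; the scale factor is again inert since it depends only on $(x_0,x_1)$ while $\mathcal{A}(\mathbb{k}_3)$ shifts only~$x$.

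For~\eqref{k2_AW}, I would expand $G_0(x_0;x)\widebar{P}_n$ from the definition~\eqref{define_G_n}, after simplifying the $\eth_0^{-1}$-coefficient to $q^{1/2}+q^{-1/2}x_0^4+x_0^2 \ch(x)$. This produces a linear combination of $P_n(x;q^{\pm 1/2}x_0,x_1)$ with $x$-dependent coefficients. The $\eth_0$-piece is rewritten via the connection formula~\eqref{connection-AW} as $P_n+\lambda_n P_{n-1}$. The $\eth_0^{-1}$-piece is treated by first applying the three-term recurrence~\eqref{3-term-AW} at $x_0\mapsto q^{-1/2}x_0$ to absorb the $\ch(x)$ factor, and then by successively applying the shifted connection $P_k(x;x_0,x_1) = P_k(x;q^{-1/2}x_0,x_1)+\lambda_k(q^{-1/2}x_0,x_1) P_{k-1}(x;q^{-1/2}x_0,x_1)$ at $k=n+1$ and $k=n$ to eliminate the backward-shifted polynomials $P_{n+1}(x;q^{-1/2}x_0,x_1)$ and $P_n(x;q^{-1/2}x_0,x_1)$. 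A residual multiple of $P_{n-1}(x;q^{-1/2}x_0,x_1)$ then remains, whose coefficient vanishes exactly by the identity~\eqref{Beta_and_lambda_Gamma} shifted by $x_0\mapsto q^{-1/2}x_0$. Collecting what survives reproduces~\eqref{k2_AW}. Equation~\eqref{k4_AW} then follows by the manifest $x_0\leftrightarrow x_1$ symmetry of the entire argument.

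Equation~\eqref{A-k6_P6-bar} is attacked in the same spirit, but with simultaneous shifts in $x_0$ and $x_1$. Expanding $\mathcal{A}(\mathbb{k}_6)$ via~\eqref{A-k6_U1U0} splits its action into a $G_0(x_0;x)G_0(x_1;x)$-contribution and a sum over $\epsilon=\pm 1$ of $K_0(x_0;x^\epsilon)K_0(x_1;x^\epsilon)\eth^{2\epsilon}$-contributions. Each doubly-shifted polynomial $P_n(x;q^{\pm 1/2}x_0,q^{\pm 1/2}x_1)$ is reduced by iterating~\eqref{connection-AW} in $x_0$ and its $x_0\leftrightarrow x_1$ counterpart, the three-term recurrence~\eqref{3-term-AW} being used to absorb the various $x$-dependent prefactors. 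Two applications of the truncation identity~\eqref{Beta_and_lambda_Gamma}, one in $x_0$ and one in $x_1$, are needed to collapse the expansion onto $\widebar{P}_{n+1},\widebar{P}_n,\widebar{P}_{n-1}$.

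The main obstacle is~\eqref{A-k6_P6-bar}: one must track cancellations between the $G_0G_0$ and $K_0K_0\eth^{2\epsilon}$ pieces across the sum over $\epsilon=\pm 1$, with~\eqref{Beta_and_lambda_Gamma} firing in both variables simultaneously, and the $x$-shift operator $\eth^{2\epsilon}$ interacting nontrivially with the parameter shifts. The algebra is voluminous but mechanical; the cleanly symmetric shape of the target right-hand side, with the perfect squares $(x_0^2x_1^2-q^{n+2})^2$, $(x_b^2-q^{n+1})^2$, and $(1-q^n)^2$, is a strong structural hint that the scattered terms must indeed conspire to collapse as claimed.
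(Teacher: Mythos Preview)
Your treatment of $\mathbb{k}_1,\mathbb{k}_3,\mathbb{k}_5$ and of $\mathbb{k}_2,\mathbb{k}_4$ is exactly the paper's: the first three are immediate from~\eqref{define_A-k} and~\eqref{eigen_AW}, and for $\mathbb{k}_2$ the paper expands $G_0$, applies~\eqref{connection-AW} and~\eqref{3-term-AW}, and closes with~\eqref{Beta_and_lambda_Gamma}, precisely as you outline.

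For $\mathbb{k}_6$ your plan has a real gap. The $K_0(x_0;x^\epsilon)K_0(x_1;x^\epsilon)\eth^{2\epsilon}$ contribution does not merely produce ``$x$-dependent prefactors'' on parameter-shifted polynomials $P_n(x;q^{\pm 1/2}x_0,q^{\pm 1/2}x_1)$: the operator $\eth^{2\epsilon}$ actually shifts the argument, so what appears is $P_n(q^{\epsilon}x;q^{\pm 1/2}x_0,q^{\pm 1/2}x_1)$. Neither~\eqref{connection-AW} (a parameter shift) nor~\eqref{3-term-AW} (multiplication by $\ch(x)$) tells you how to re-expand $P_n(q^{\epsilon}x;\,\cdot\,)$ in the basis $\{P_m(x;\,\cdot\,)\}$, so your toolbox as stated does not close. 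You acknowledge the $x$-shift is present but give no mechanism for it.

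The paper handles $\mathbb{k}_6$ by a different, cleaner route. It first commutes the parameter shifts $\eth_0^{a}\eth_1^{b}$ to the right, writing
\[
\mathcal{A}(\mathbb{k}_6)=\frac{1}{(1-x_0^2)(1-x_1^2)}\sum_{a,b=\pm 1}(-1)^{\frac{a+b}{2}+1}\,\widehat{\mathcal{d}}_{a,b}\,\eth_0^{a}\eth_1^{b},
\]
so that each $\widehat{\mathcal{d}}_{a,b}$ is a pure second-order $q$-difference operator in $x$. The key observation is that every $\widehat{\mathcal{d}}_{a,b}$ factorizes as a product of two Kalnins--Miller shift operators $\widehat{\mathcal{m}},\widehat{\mathcal{l}},\widehat{\mathcal{l}}^{*}$ from~\eqref{shift_Kalnins}, whose actions~\eqref{action_shift_Kalnins} on Askey--Wilson polynomials are already known. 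This yields one-line identities such as $\widehat{\mathcal{d}}_{-1,-1}\,P_n(x;q^{-1/2}x_0,q^{-1/2}x_1)=q^{-n-5/2}(x_0^2x_1^2-q^{n+2})^2\,P_{n+1}$, which is where the perfect squares you noticed come from; no iterated cancellations or uses of~\eqref{Beta_and_lambda_Gamma} are needed for~\eqref{A-k6_P6-bar}.
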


\begin{proof}
  The cases for $\mathbb{k}_1$ and $\mathbb{k}_5$  are trivial.
  The action of $\mathcal{A}(\mathbb{k}_3)$
  is from the definition of the reduced Askey--Wilson operator~\eqref{eigen_AW}.

  For $\mathbb{k}_2$~\eqref{A-k2_G},
  the definition~\eqref{define_G_n}
  gives
  \begin{align*}
    & \mathcal{A}(\mathbb{k}_2) \, \widebar{P}_n
      =
      \I \, \frac{q^{-\frac{1}{4}}}{1-x_0^2}
      \left\{
      - \widebar{P}_n(x;q^{\frac{1}{2}}x_0, x_1)
      +
      x_0^2 \left(
      \ch\left(q^{-\frac{1}{2}}x_0^2 \right)
      +\ch(x)
      \right) \, \widebar{P}_n(x; q^{-\frac{1}{2}}x_0 , x_1)
      \right\}
    \\
    & =
      \begin{multlined}[t][.84\textwidth]
      \I \, \frac{q^{-\frac{1}{4}}}{1-x_0^2}
      \Biggl\{
      -   \frac{\nu_n(q^{\frac{1}{2}}x_0, x_1)
      }{\nu_n(x_0,x_1)} \, \widebar{P}_n
      - \frac{\nu_n(q^{\frac{1}{2}}x_0,x_1)
      }{ \nu_{n-1}(x_0,x_1)}  \, \lambda_n(x_0,x_1) \,
      \widebar{P}_{n-1}
      \\
      +x_0^2 \frac{\nu_n(q^{-\frac{1}{2}}x_0,x_1)}{
        \nu_{n+1}(q^{-\frac{1}{2}}x_0,x_1)} \,
      \widebar{P}_{n+1}(x;q^{-\frac{1}{2}}x_0,x_1)
      +x_0^2 \left( \ch ( q^{-\frac{1}{2}}x_0^2 )
        +\beta_n( q^{-\frac{1}{2}} x_0, x_1 )
        \right) \, \widebar{P}_n(x;q^{-\frac{1}{2}}x_0,x_1)
        \\
        +x_0^2 \frac{
          \nu_n(q^{-\frac{1}{2}}x_0,x_1)}{
          \nu_{n-1}(q^{-\frac{1}{2}}x_0,x_1)} \,
        \gamma_n(q^{-\frac{1}{2}}x_0, x_1) \,
        \widebar{P}_{n-1}(x;q^{-\frac{1}{2}}x_0,x_1)
      \Biggr\} ,
    \end{multlined}
  \end{align*}
  where, in the second equality,
  we have used~\eqref{3-term-AW} and~\eqref{connection-AW}.
  Using~\eqref{Beta_and_lambda_Gamma}
  and~\eqref{connection-AW},
  we obtain~\eqref{k2_AW}.
  The action~\eqref{k4_AW} of $\mathcal{A}(\mathbb{k}_4)$ is given similarly
  by
  transposing~$x_0$ and~$x_1$.

  For $\mathbb{k}_6$~\eqref{A-k6_U1U0},
  we may rewrite the  operator~$\mathcal{A}(\mathbb{k}_6)$ as
  \begin{equation}
    \label{A-k6_with_q-d}
    \mathcal{A}(\mathbb{k}_6)
    =
    \frac{1}{(1-x_0^2)( 1-x_1^2)}
    \sum_{a, b =\pm 1} (-1)^{\frac{a+b}{2}+1}
    \widehat{\mathcal{d}}_{a,b} \eth_0^a \eth_1^b ,
  \end{equation}
  where $\widehat{\mathcal{d}}_{a,b}$ are the $q$-difference operators
  for $x$ defined by
  \begin{subequations}
    \label{difference_in_k6}
    \begin{align}
      \widehat{\mathcal{d}}_{1,1}
      &=
        \sum_{\epsilon=\pm 1}
        \omega(x^\epsilon )
        \left( \eth^{2\epsilon} -1
        \right) ,
      \\
      \widehat{\mathcal{d}}_{1,-1}
      &    =
        \sum_{\epsilon=\pm 1}
        \omega(x^\epsilon)
        \left(
        b(x_1,x^\epsilon) \eth^{2\epsilon} - c(x_1,x)
        \right) ,
      \\
      \widehat{\mathcal{d}}_{-1,1}
      &    =
        \sum_{\epsilon=\pm 1}
        \omega(x^\epsilon)
        \left(
        b(x_0,x^\epsilon) \eth^{2\epsilon} - c(x_0,x)
        \right) ,
      \\
      \widehat{\mathcal{d}}_{-1,-1}
      & =
        \sum_{\epsilon=\pm 1}
        \omega(x^\epsilon)
        \left(
        b(x_0,x^\epsilon) b(x_1,x^\epsilon) \eth^{2\epsilon}
        -c(x_0,x) c(x_1,x)
        \right) .
    \end{align}
  \end{subequations}
  Here we have used
  \begin{equation*}
    b(x_b,x)=
    \frac{
      \bigl(q^{\frac{1}{2}} x + x_b^2 \bigr)
      \bigl(q^{\frac{3}{2}} x + x_b^2 \bigr)
    }{q \, x},
    \qquad
    c(x_b,x)=
    \frac{
      \bigl(q^{\frac{1}{2}} x + x_b^2 \bigr)
      \bigl(q^{\frac{1}{2}}  + x \, x_b^2 \bigr)
    }{q^{\frac{1}{2}} \, x}.
  \end{equation*}
  Recalling   the $q$-shift operators~\eqref{shift_Kalnins} for the
  Askey--Wilson polynomials,
  we can check by straightforward computations
  that 
  the operators~$\widehat{\mathcal{d}}_{a,b}$~\eqref{difference_in_k6}
  are factorized as
  \begin{subequations}
    \begin{align}
      \label{eq:17}
      \widehat{\mathcal{d}}_{1,1}
      & =
        -q^{-\frac{1}{2}} \,
        \widehat{\mathcal{l}}^{(-1,-1,c q^{-\frac{1}{2}} , d q^{-\frac{1}{2}})}
        \,
        \widehat{\mathcal{m}}^{(-q^{\frac{1}{2}} , - q^{\frac{1}{2}},
        c q^{-1}, d q^{-1})} ,
      \\
      \widehat{\mathcal{d}}_{1,-1}
      & =
        q^{-\frac{3}{2}} \, x_1^4 \,
        \widehat{\mathcal{m}}^{(-q, -q/x_1^2, - 1/x_0^2, -1)}
        \widehat{\mathcal{m}}^{(-q^{\frac{1}{2}} , -
        q^{\frac{3}{2}}/x_1^2,
        - q^{-\frac{1}{2}}/x_0^2, - q^{\frac{1}{2}})} ,
      \\
      \widehat{\mathcal{d}}_{-1,1}
      & =
        q^{-\frac{3}{2}}  \, x_0^4 \,
        \widehat{\mathcal{m}}^{(-q, -q/x_0^2, - 1/x_1^2, -1)}
        \widehat{\mathcal{m}}^{(-q^{\frac{1}{2}} ,
        - q^{\frac{3}{2}}/x_0^2,
        - q^{-\frac{1}{2}}/x_1^2,
        - q^{\frac{1}{2}})} ,
      \\
      \widehat{\mathcal{d}}_{-1,-1}
      & =
        -q^{-2} x_0^4 \,  x_1^4 \, 
        \widehat{\mathcal{l}}^{* ( -q/x_0^2,
        -q/x_1^2, -q, -q)}
        \widehat{\mathcal{m}}^{(-q^{\frac{3}{2}}/x_0^2 ,
        -  q^{\frac{3}{2}}/x_1^2, - q^{\frac{1}{2}}, -   q^{\frac{1}{2}})} .
    \end{align}
  \end{subequations}
  Here $c$ and $d$ in $\widehat{\mathcal{d}}_{1,1}$ are arbitrary.
  Then we  obtain
  the actions~\eqref{action_shift_Kalnins} on the Askey--Wilson polynomials
  as
  \begin{subequations}
    \label{k6_action_part}
    \begin{gather}
      \label{k6_supplement-1}
      \widehat{\mathcal{d}}_{1,1} P_n ( x; q^{\frac{1}{2}} x_0, q^{\frac{1}{2}} x_1)
      =
      q^{-n-\frac{1}{2}}
      (1-q^n)^2 \,
      P_{n-1} ,
      \\
      \label{k6_supplement-2}
      \widehat{\mathcal{d}}_{1,-1}
      P_n \left( x; q^{\frac{1}{2}} x_0, q^{-\frac{1}{2}}x_1 \right)
      =
      q^{-n-\frac{3}{2}}    \left(x_1^2 - q^{n+1} \right)^2
      P_n ,
      \\
      \label{k6_supplement-2a}
      \widehat{\mathcal{d}}_{-1,1}
      P_n \left( x; q^{-\frac{1}{2}} x_0, q^{\frac{1}{2}}x_1 \right)
      =
      q^{-n-\frac{3}{2}}    \left( x_0^2 - q^{n+1} \right)^2 \,
      P_n ,
      \\
      \label{k6_supplement-3}
      \widehat{\mathcal{d}}_{-1,-1}
      P_n\left(
        x; q^{-\frac{1}{2}}x_0 , q^{-\frac{1}{2}}x_1
      \right)
      =
      q^{-n-\frac{5}{2}}   
      \left( x_0^2 x_1^2 - q^{n+2} \right)^2
      P_{n+1} .
    \end{gather}
  \end{subequations}
  Combining the results~\eqref{k6_action_part} with~\eqref{A-k6_with_q-d},
  we get~\eqref{A-k6_P6-bar}.
\end{proof}

\section{Map from the Genus-Two Skein Module}
\label{sec:homomorphism}

In this section
we fix the scale function $\nu_n(x_0,x_1)$ in~\eqref{define_bar-P}.
Prop.~\ref{prop:action} reduces to the following.
\begin{coro}
When we set the scale function as
\begin{align}
  \label{define_nu}
  \nu_n(x_0,x_1)
  & =
    (-1)^n q^{-\frac{1}{2}n(n+1)}
  \left(  \tfrac{q^{n+1}}{x_0^2 x_1^2}\right)_n
  \left(  \tfrac{q^{n+1}}{x_0^2 x_1^2}\right)_{n+1}
  \prod_{b=0,1}
  \frac{
    x_b^{\frac{1}{2}} \E^{ \pi \I \frac{\log x_b}{\log q}}
    }{
    \left(\frac{1}{x_b^2}\right)_{n+1}
    \left(\frac{q}{x_b^2}\right)_{n}
    (x_b^2)_\infty
  } ,
\end{align}
then the
actions of~\eqref{A-action_on_P-bar} are  read   as
\begin{subequations}
  \label{k_special-nu}
  \begin{align}
    \label{k_special-nu-1}
    &
      \mathcal{A}(\mathbb{k}_1) \widebar{P}_n
      =\ch(x_0) \, \widebar{P}_n ,
    \\
    &
      \mathcal{A}(\mathbb{k}_3) \widebar{P}_n
      = - \ch\left(q^{-n-\frac{1}{2}}x_0 x_1 \right) \, \widebar{P}_n ,
    \\
    \label{k_special-nu-5}
    &
      \mathcal{A}(\mathbb{k}_5) \widebar{P}_n
      =\ch(x_1) \, \widebar{P}_n ,
    \\
    \label{k2_special-nu}
    &
      \begin{multlined}[b][.84\textwidth]
        \mathcal{A}(\mathbb{k}_2) \widebar{P}_n
        =
        - 
        \frac{
          q^{-n-\frac{3}{2}}x_0^2
          \left( 1- x_1^2 q^{-n-1}\right)^2
        }{
          \left(1- x_0^2 x_1^2 q^{-2n-3} \right)
          \left(1- x_0^2 x_1^2 q^{-2n-2} \right)
        } \,
        \widebar{P}_{n+1}
        - 
        \frac{
          q^{-n+\frac{1}{2}}
          \left( 1-q^n \right)^2
          x_0^2}{
          (1-x_0^2 ) ( 1 - q \, x_0^2)
        } \,
        \widebar{P}_{n-1}
        \\
        +
        \left( 1 +
          \frac{
            ( 1- x_0^2 q^{-n})^2
            (1- x_0^2 x_1^2 q^{-n})^2
          }{
            (1-x_0^2) ( 1- q \,x_0^2)
            (1-x_0^2 x_1^2 q^{-2n-1})
            (1-x_0^2 x_1^2 q^{-2n})
          }
          \right) \,
          \widebar{P}_n ,
      \end{multlined}
    \\
    &
      \begin{multlined}[b][.84\textwidth]
        \mathcal{A}(\mathbb{k}_4) \widebar{P}_n
        =
        - 
        \frac{
          q^{-n-\frac{3}{2}}x_1^2
          \left( 1- x_0^2 q^{-n-1}\right)^2
        }{
          \left(1- x_0^2 x_1^2 q^{-2n-3} \right)
          \left(1- x_0^2 x_1^2 q^{-2n-2} \right)
        } \,
        \widebar{P}_{n+1}
        - 
        \frac{
          q^{-n+\frac{1}{2}}
          \left( 1-q^n \right)^2
          x_1^2}{
          (1-x_1^2 ) ( 1 - q \, x_1^2)
        } \,
        \widebar{P}_{n-1}
        \\
        +
        \left( 1 +
          \frac{
            ( 1- x_1^2 q^{-n})^2
            (1- x_0^2 x_1^2 q^{-n})^2
          }{
            (1-x_1^2) ( 1- q \,x_1^2)
            (1-x_0^2 x_1^2 q^{-2n-1})
            (1-x_0^2 x_1^2 q^{-2n})
          }
          \right) \,
          \widebar{P}_n ,
      \end{multlined}
    \\
    &
      \begin{multlined}[b][.84\textwidth]
        \mathcal{A}(\mathbb{k}_6) \widebar{P}_n
        =
        \widebar{P}_{n+1}
        + \frac{(1-q^n)^2 (1-x_0^2 x_1^2 q^{-n+1})^2}{
          (1-x_0^2) ( 1- q \, x_0^2)
          (1-x_1^2) (1-q \, x_1^2)
        } \, \widebar{P}_{n-1}
        \\
        -
        \left(
          \frac{
            q^{n+\frac{1}{2}} (1-x_0^2 q^{-n})^2}{
            (1-x_0^2) (1- q \, x_0^2)
          }
          +
          \frac{
            q^{n+\frac{1}{2}} (1-x_1^2 q^{-n})^2}{
            (1-x_1^2) (1- q  \, x_1^2)
          }
        \right) \,
        \widebar{P}_n .
      \end{multlined}
  \end{align}
\end{subequations}
\end{coro}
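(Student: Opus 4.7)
The plan is to specialize Proposition~\ref{prop:action} by inserting the explicit formula~\eqref{define_nu} for $\nu_n(x_0,x_1)$ and reducing every coefficient in~\eqref{A-action_on_P-bar} using the closed forms~\eqref{define_Gamma_n} for $\gamma_n$ and~\eqref{define_lambda} for $\lambda_n$. The statements~\eqref{k_special-nu-1}--\eqref{k_special-nu-5} for $\mathcal{A}(\mathbb{k}_1), \mathcal{A}(\mathbb{k}_3), \mathcal{A}(\mathbb{k}_5)$ are immediate, since the corresponding identities in Proposition~\ref{prop:action} do not involve $\nu_n$ at all. Thus the entire content of the corollary is the verification of the three displayed formulas for $\mathcal{A}(\mathbb{k}_2), \mathcal{A}(\mathbb{k}_4), \mathcal{A}(\mathbb{k}_6)$.

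First I would factor $\nu_n(x_0,x_1) = A_n \, B_n(x_0,x_1) \, C_n(x_0) \, C_n(x_1)$, where $A_n = (-1)^n q^{-n(n+1)/2}$, $B_n = (q^{n+1}/(x_0^2 x_1^2))_n (q^{n+1}/(x_0^2 x_1^2))_{n+1}$, and $C_n(x_b)$ collects the single-variable factor involving $(1/x_b^2)_{n+1}$, $(q/x_b^2)_n$, $(x_b^2)_\infty$, and the transcendental prefactor $x_b^{1/2} \E^{\pi\I \log x_b/\log q}$. All ratios appearing in~\eqref{k2_AW}, \eqref{k4_AW}, \eqref{A-k6_P6-bar} are of the form $\nu_n(q^{\epsilon/2}x_0, q^{\delta/2}x_1) / \nu_{n+\eta}(x_0,x_1)$ with $\epsilon,\delta \in \{-1,0,1\}$ and $\eta \in \{-1,0,1\}$. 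Each such ratio telescopes into a finite rational function of $q^n, x_0^2, x_1^2$: the infinite product $(x_b^2)_\infty$ absorbs the $q^{1/2}$ shift in $x_b$ by supplying the single factor $(1-x_b^2)^{\pm 1}$, while the transcendental prefactor is inserted precisely so that the $q^{1/2}$ shift produces no extraneous $x_b^{1/2}$ dependence. These computations are purely mechanical but indispensable, and they are what fixes the form~\eqref{define_nu}.

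Next, with those ratios in hand, I would substitute into~\eqref{k2_AW} and collect the three coefficients of $\widebar{P}_{n+1}, \widebar{P}_n, \widebar{P}_{n-1}$. The coefficient of $\widebar{P}_{n-1}$ receives a contribution only from the $\lambda_n$ term; the coefficient of $\widebar{P}_{n+1}$ comes from the bare ratio $x_0^2 \nu_n(q^{-1/2}x_0,x_1)/\nu_{n+1}(x_0,x_1)$; the diagonal coefficient is the nontrivial one, where the identity~\eqref{Beta_and_lambda_Gamma}, together with the explicit $\lambda_n, \gamma_n$ from~\eqref{define_lambda}, \eqref{define_Gamma_n}, is needed to collapse two Pochhammer ratios into the single rational expression displayed in~\eqref{k2_special-nu}. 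The formula for $\mathcal{A}(\mathbb{k}_4) \widebar{P}_n$ then follows from the symmetry $x_0 \leftrightarrow x_1$, since both $\nu_n$ and~\eqref{k4_AW} are manifestly invariant under this exchange. Finally, for $\mathcal{A}(\mathbb{k}_6)\widebar{P}_n$ I would insert the diagonal ratios into~\eqref{A-k6_P6-bar}: the choice~\eqref{define_nu} is engineered so that the $\widebar{P}_{n+1}$-coefficient collapses to $1$ (the factor $(x_0^2 x_1^2 - q^{n+2})^2$ cancels against the $B_n$-ratio), while the $\widebar{P}_{n-1}$ and $\widebar{P}_n$ coefficients reduce to the explicit rational forms listed.

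The main obstacle is the algebraic bookkeeping for the $\mathbb{k}_6$ diagonal ratio $\nu_n(q^{\pm 1/2}x_0, q^{\mp 1/2}x_1)/\nu_n(x_0,x_1)$, where the $B_n$-piece shifts $q^{n+1}/(x_0^2 x_1^2)$ in neither direction but still changes each Pochhammer factor individually, so the naive telescoping is delicate; one has to track carefully which factors of $(1 - q^k x_0^2 x_1^2)^{\pm 1}$ survive and which cancel in the sum of two such terms that multiplies $\widebar{P}_n$. A good conceptual check after the computation is that the resulting recurrence operator $\mathcal{A}(\mathbb{k}_6)$ on $\widebar{P}_n$ has leading coefficient $1$, reflecting the monic normalization of $\widebar{P}_n$ as a polynomial in the $\mathbb{k}_6$-eigenvalue variable, which is precisely the role that the factors $(q^{n+1}/(x_0^2 x_1^2))_{n+1}$ in~\eqref{define_nu} play.
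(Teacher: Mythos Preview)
Your proposal is correct and matches the paper's approach, which presents the corollary as a direct specialization of Proposition~\ref{prop:action} with no further argument. Two minor simplifications: identity~\eqref{Beta_and_lambda_Gamma} was already absorbed into the proof of Proposition~\ref{prop:action}, so only the explicit forms~\eqref{define_Gamma_n} and~\eqref{define_lambda} are needed here; and since $B_n$ depends on $x_0,x_1$ only through the product $x_0^2 x_1^2$, it is exactly invariant under the $\mathbb{k}_6$ diagonal shift $x_0\to q^{1/2}x_0,\ x_1\to q^{-1/2}x_1$, so that ratio is simpler than you anticipate.
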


It is now straightforward to see the correspondence with the genus-two
skein module~\eqref{k_and_theta-link}.
To construct explicitly
the map from the skein module,
we introduce
$\left| i,j,k \right\rangle$
for the admissible triple $(i,j,k)$, on which~$x_b$ and the $q$-difference operator~$\eth_b$ act as
\begin{equation}
  \label{eq:8}
  \begin{gathered}
    x_0  \left| i,j,k \right\rangle
    = -q^{\frac{i+1}{2}} \left| i,j,k \right\rangle,
    \qquad \qquad
    x_1  \left| i,j,k \right\rangle
    = -q^{\frac{k+1}{2}} \left| i,j,k \right\rangle,
    \\
    \eth_0^a \eth_1^b \left| i,j,k \right\rangle
    =
    \left|i-a, j, k-b \right\rangle
    .
  \end{gathered}
\end{equation}
We then consider a composition of maps
\begin{equation}
  \label{map_for_module}
  n(i,j,k) \mapsto \left| i,j,k \right\rangle
  \mapsto
  \widebar{P}_{\frac{i-j+k}{2}}  ,
\end{equation}
associated to
$  \mathbb{k}_a \mapsto \mathcal{A}(\mathbb{k}_a)$
in~\eqref{define_A-k}.
The coefficients~$D_{a,b}(i,j,k)$ in~\eqref{k_and_theta-link}
can be rewritten as the actions of~$x_b$
on~$\left| i,j,k\right\rangle$.
It is easy to see that
the first three actions of~\eqref{k_and_theta-link}
map to~\eqref{k_special-nu-1}--\eqref{k_special-nu-5}.
We can check that
the remaining actions of~\eqref{k_and_theta-link} also map to
those in~\eqref{k_special-nu} by~\eqref{map_for_module}.
To conclude,
we have shown that the reduced Askey--Wilson
polynomials~$\widebar{P}_n$~\eqref{define_bar-P}
with~\eqref{define_nu} play a role of
the $\theta$-link~$n(i,j,k)$ in
the genus-two skein module~\eqref{k_and_theta-link}.

\section{Concluding Remarks}
We have shown the correspondence between the genus-two skein algebra
in~\cite{CookeSamue21a} and the family of the $q$-difference operators
including the reduced Askey--Wilson operators~\cite{KHikami19a}.
Given is
the topological
picture of the reduced Askey--Wilson polynomial
$\widebar{P}_n$~\eqref{define_bar-P}.
As a generalization of the Askey--Wilson polynomial,
it will be promising to study 
the 
roles of the $C^\vee C_n$-type DAHA and the
Macdonald--Koornwinder polynomials in the genus-two HOMFLY
skein algebra.

\section*{Acknowledgments}
The work of KH is supported in part by
JSPS KAKENHI Grant Numbers
JP23K22388.


\begin{thebibliography}{10}
\providecommand{\url}[1]{\texttt{#1}}
\providecommand{\urlprefix}{URL }
\providecommand{\eprint}[2][]{\url{#2}}

\bibitem{Arthamo23a}
S.~Arthamonov, \emph{Classical limit of genus two {DAHA}},
  \href{http://dx.doi.org/10.1007/s00029-024-01009-2}{Selecta Math.}
  \textbf{31}, 19 (2025), 62 pages.

\bibitem{ArthaShaki17a}
S.~Arthamonov and S.~Shakirov, \emph{Genus two generalization of {$A_1$}
  spherical {DAHA}}, \href{http://dx.doi.org/10.1007/s00029-019-0447-1}{Selecta
  Math.} \textbf{25}, 17 (2019), 29 pages.

\bibitem{ArthaShaki15a}
---{}---{}---, \emph{Refined {Chern--Simons} theory in genus two},
  \href{http://dx.doi.org/10.1142/S0218216520500443}{J. Knot Theory Ramif.}
  \textbf{29}, 2050044 (2020).

\bibitem{AsWi85}
R.~Askey and J.~Wilson, \emph{Some basic hypergeometric orthogonal polynomials
  that generalize {Jacobi} polynomials}, Mem. Amer. Math. Soc. \textbf{54},
  1--55 (1985).

\bibitem{BerestSamuel16a}
{\relax Yu}.~Berest and P.~Samuelson, \emph{Affine cubic surfaces and character
  varieties of knots},
  \href{http://dx.doi.org/10.1016/j.jalgebra.2017.11.015}{J. Algebra}
  \textbf{500}, 644--690 (2018).

\bibitem{Chered05Book}
I.~Cherednik, \emph{Double Affine {Hecke} Algebras}, vol. 319 of \emph{London
  Math. Soc. Lecture Note Series}, Cambridge Univ. Press, Cambridge, 2005.

\bibitem{IChered13a}
---{}---{}---, \emph{{Jones} polynomials of torus knots via {DAHA}},
  \href{http://dx.doi.org/10.1093/imrn/rns202}{Int. Math. Res. Not.}
  \textbf{2013}, 5366--5425 (2013).

\bibitem{IChered16a}
---{}---{}---, \emph{{DAHA}-{Jones} polynomials of torus knots},
  \href{http://dx.doi.org/10.1007/s00029-015-0210-1}{Selecta Math. (N.S.)}
  \textbf{22}, 1013--1053 (2016).

\bibitem{CookeSamue21a}
J.~Cooke and P.~Samuelson, \emph{On the genus two skein algebra},
  \href{http://dx.doi.org/10.1112/jlms.12497}{J. London Math. Soc.}
  \textbf{104}, 2260--2298 (2021).

\bibitem{GaspRahm04}
G.~Gasper and M.~Rahman, \emph{Basic Hypergeometric Series}, vol.~96 of
  \emph{Encyclopedia of Mathematics and Its Applications}, Cambridge Univ.
  Press, Cambridge, 2004, 2nd ed.

\bibitem{KHabiro06b}
K.~Habiro, \emph{A unified {Witten}--{Reshetikhin}--{Turaev} invariant for
  integral homology sphere},
  \href{http://dx.doi.org/10.1007/s00222-007-0071-0}{Invent. Math.}
  \textbf{171}, 1--81 (2008).

\bibitem{KHikami19a}
K.~Hikami, \emph{{DAHA} and skein algebra of surfaces: double-torus knots},
  \href{http://dx.doi.org/10.1007/s11005-019-01189-5}{Lett. Math. Phys.}
  \textbf{109}, 2305--2358 (2019).

\bibitem{KHikami24a}
---{}---{}---, \emph{Generalized double affine {Hecke} algebra for double
  torus}, \href{http://dx.doi.org/10.1007/s11005-024-01848-2}{Lett. Math.
  Phys.} \textbf{114}, 102 (2024), 16 pages.


\bibitem{KHikami24b}
---{}---{}---, \emph{A note on double affine {Hecke} algebra for skein algebra
  on twice-punctured torus},
  \href{http://dx.doi.org/10.1016/j.geomphys.2024.105408}{J. Geom. Phys.}
  \textbf{209}, 105408 (2025), 13 pages.

\bibitem{KHikami25pre}
---{}---{}---, work in progress  (2025).

\bibitem{KalnMill89a}
E.~G. Kalnins and W.~Miller, Jr, \emph{Symmetry techniques for $q$-series:
  {Askey--Wilson} polynomials},
  \href{http://dx.doi.org/10.1216/RMJ-1989-19-1-223}{Rocky Mountain J. Math.}
  \textbf{19}, 223--230 (1989).

\bibitem{KaufLins94Book}
L.~H. Kauffman and S.~L. Lins, \emph{{Temperley--Lieb} Recoupling Theory and
  Invariants of 3-Manifolds}, vol. 134 of \emph{Ann. Math. Stud.}, Princeton
  Univ. Press, Princeton, 1994.

\bibitem{Licko93b}
W.~B.~R. Lickorish, \emph{Skeins and handlebodies}, Pacific J. Math.
  \textbf{159}, 337--349 (1993).

\bibitem{Licko97Book}
---{}---{}---, \emph{An Introduction to Knot Theory}, vol. 175 of
  \emph{Graduate Texts in Mathematics}, Springer, New York, 1997.

\bibitem{Macdonald03book}
I.~G. Macdonald, \emph{Affine {Hecke} algebras and orthogonal polynomials},
  Cambridge Univ. Press, Cambridge, 2003.

\bibitem{MortoSamue21a}
H.~Morton and P.~Samuelson, \emph{{DAHA}s and skein theory},
  \href{http://dx.doi.org/10.1007/s00220-021-04052-8}{Commun. Math. Phys.}
  \textbf{385}, 1655--1693 (2021).

\bibitem{NoumiStokm00a}
M.~Noumi and J.~V. Stokman, \emph{{Askey--Wilson} polynomials: an affine
  {Hecke} algebraic approach}, in R.~{\'A}lvarez-Nodarse, F.~Marcell{\'a}n, and
  W.~van Assche, eds., \emph{Laredo Lectures on Orthogonal Polynomials and
  Special Functions}, pp. 111--144, Nova Science Pub., New York, 2004.

\bibitem{Oblom04a}
A.~Oblomkov, \emph{Double affine {Hecke} algebras of rank $1$ and affine cubic
  surfaces}, \href{http://dx.doi.org/10.1155/S1073792804133072}{IMRN}
  \textbf{2004}, 877--912 (2004).

\bibitem{Santha24a}
R.~Santharoubane, \emph{Algebraic generators of the skein algebra of a
  surface}, \href{http://dx.doi.org/10.2140/agt.2024.24.2571}{Alg. Geom.
  Topology} \textbf{24}, 2571--2578 (2024).

\end{thebibliography}

\end{document}